\documentclass[12pt]{amsart}

\usepackage{hyperref}

\usepackage{graphicx, enumerate, url}
\usepackage{amssymb, amsmath, amsthm}
\usepackage{color}
\usepackage{tikz}

\numberwithin{equation}{section}

\theoremstyle{plain}
\newtheorem{theorem}{Theorem}[section]
\newtheorem{proposition}[theorem]{Proposition}
\newtheorem{lemma}[theorem]{Lemma}

\theoremstyle{definition}
\newtheorem{definition}[theorem]{Definition}
\newtheorem{example}[theorem]{Example}

\theoremstyle{remark}

\DeclareMathOperator{\tr}{tr}

\DeclareMathOperator{\rank}{rank}

\DeclareMathOperator{\adj}{adj}


\newcommand{\trans}{^\top}

\newcommand{\be}{\mathbf{e}}


\begin{document}
\title[Eigenvector-eigenvalue identity]{A generalized eigenvector-eigenvalue identity from the viewpoint of exterior algebra\footnote{ORCID 0000-0001-5704-7270}}
\author[M. Stawiska]{Ma{\l}gorzata~Stawiska}
\address{AMS/Mathematical Reviews, 535 William St.,  Ste 2100, Ann Arbor, MI 48103, USA} \email{stawiska@umich.edu}

\begin{abstract} We consider square  matrices over $\mathbb{C}$ satisfying an identity relating their  eigenvalues  and the corresponding eigenvectors  re-proved and discussed by Denton, Parker, Tao and Zhang,  called the eigenvector-eigenvalue identity. We prove that for an eigenvalue $\lambda$ of a given matrix the identity holds if and only if the geometric multiplicity of $\lambda$ equals its algebraic multiplicity. We do not make any other assumptions on the matrix and allow the multiplicity of the eigenvalue to be greater than 1, which provides a  substantial generalization of the identity. In the proof we use exterior algebra, particularly the properties  of  higher  adjugates of a matrix.
\end{abstract}
\maketitle

 \noindent {\bf 2020 Mathematics Subject Classification:} 15A15, 15A18, 15A24, 15A69, 15A75

\noindent \emph{Keywords}:  Matrices, characteristic polynomial, eigenvalues, eigenvectors, adjugates, multilinear algebra, geometric algebra.\\

\noindent \emph{Dedication:} To the memory of Witold Kleiner (1929-2018), my linear algebra teacher.

\section{Introduction} \label{sec:intro}

Recently an explicit quantitative relation between eigenvectors and eigenvalues of a Hermitian matrix (over the field of complex numbers $\mathbb{C}$) gained attention because of its importance in description of neutrino oscillations (we refer the interested reader to  \cite{DPZ} for details). The relation is as follows:

\begin{theorem} (see \cite{DPTZ} for this formulation) \label{thm: EE}
If $A$ is an $n \times n$ Hermitian matrix with eigenvalues $\lambda_1(A),...,\lambda_n(A)$ and $i,j= 1,...,n$, then the $j$th component $v_{i,j}$ of a unit eigenvector $v_i$ associated to the eigenvalue $\lambda_i(A)$ is related to the eigenvalues $\lambda_1(M_j),...,\lambda_{n-1}(M_j)$ of the minor $M_j$ of $A$ formed by removing the $j$th row and column by the formula \[
|v_{i,j}|^2 \prod_{k=1;k \ne i}^n (\lambda_i(A)-\lambda_k(A)) =\prod_{k=1}^{n-1}(\lambda_i(A)-\lambda_k(M_j)).
\]
 \end{theorem}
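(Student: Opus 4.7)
My plan is to extract the identity from the classical cofactor/adjugate formula applied to $\lambda_i I - A$.

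First I would use the fact that $A$ is Hermitian to diagonalize it unitarily, writing $A = UDU^*$ with $U = [v_1,\dots,v_n]$ and $D = \diag(\lambda_1(A),\dots,\lambda_n(A))$. Then
\[
\adj(\lambda_i I - A) \;=\; U\,\adj(\lambda_i I - D)\, U^*,
\]
and $\adj(\lambda_i I - D)$ is the diagonal matrix whose $\ell$-th entry is $\prod_{k\ne \ell}(\lambda_i(A)-\lambda_k(A))$. Assuming $\lambda_i(A)$ is simple (which is the only case with nonvanishing content; if it is not, both sides of the claimed identity are $0$), this diagonal matrix has a single nonzero entry, in position $i$, equal to $\prod_{k\ne i}(\lambda_i(A)-\lambda_k(A))$. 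Consequently
\[
\adj(\lambda_i I - A) \;=\; \Bigl(\prod_{k\ne i}(\lambda_i(A)-\lambda_k(A))\Bigr)\, v_i v_i^{*}.
\]

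Next I would compare the $(j,j)$-entry of the two sides of this matrix identity. On the right, the $(j,j)$-entry is manifestly $|v_{i,j}|^2\prod_{k\ne i}(\lambda_i(A)-\lambda_k(A))$. On the left, by the definition of the adjugate, the $(j,j)$-entry is the $(j,j)$ cofactor of $\lambda_i I - A$, which equals $\det(\lambda_i I_{n-1} - M_j)$; expanding this determinant over the eigenvalues of the Hermitian minor $M_j$ gives $\prod_{k=1}^{n-1}(\lambda_i(A)-\lambda_k(M_j))$. Equating the two expressions yields the stated identity.

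The only real obstacle is the need to assume $\lambda_i(A)$ is simple in order to identify the unit eigenvector $v_i$ uniquely (up to a unimodular scalar, which does not affect $|v_{i,j}|^2$) and to get a rank-one adjugate. In the multiple-eigenvalue case the left side vanishes because the product contains a factor $\lambda_i(A)-\lambda_k(A)=0$; by the Cauchy interlacing theorem the right side also has a matching zero factor, so the identity still holds but becomes trivial and does not determine any eigenvector. This degeneracy is in fact the motivation for the paper's use of higher adjugates to recover a nontrivial statement in the multiple-eigenvalue setting.
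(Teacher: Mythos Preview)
Your argument is correct. The key identity $\adj(\lambda_i I - A) = \bigl(\prod_{k\ne i}(\lambda_i(A)-\lambda_k(A))\bigr)\, v_i v_i^{*}$ is exactly the $k=1$, Hermitian specialization of the paper's Theorem~\ref{thm: EEformula}, and comparing $(j,j)$-entries is precisely how the paper explains the link to the original identity in the introduction. Your handling of the degenerate (non-simple) case via Cauchy interlacing is also fine.

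The route you take to that rank-one formula, however, differs from the paper's. You exploit the Hermitian structure up front: diagonalize $A=UDU^*$, use multiplicativity of the adjugate together with $|\det U|^2=1$ to get $\adj(\lambda_i I - A)=U\,\adj(\lambda_i I - D)\,U^*$, and then read off the single nonzero diagonal entry of $\adj(\lambda_i I - D)$. The paper instead argues abstractly that $\rank(A-\lambda I)=n-1$ forces $\rank\adj(A-\lambda I)=1$, writes $\adj(A-\lambda I)=\mathbf{v}\mathbf{u}^{\top}$ with $\mathbf{v},\mathbf{u}$ right and left eigenvectors, and identifies the scalar via the Jacobi-type trace formula $\tr\adj(A-\lambda I)=-P'(\lambda)$ (Corollary~\ref{cor: diff}); the Hermitian case then enters only at the end, through $\mathbf{w}=\bar{\mathbf{v}}$. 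Your approach is shorter and more elementary for Hermitian $A$, while the paper's is designed to extend without change to arbitrary square matrices and, via higher adjugates, to eigenvalues of multiplicity $k>1$.
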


Discussions in the scientific community revealed that this identity (which got to be called ``the
eigenvector-eigenvalue  identity'') has been discovered, forgotten and then rediscovered in many different settings and ways. In particular, for symmetric matrices over $\mathbb{R}$ the identity in Theorem \ref{thm: EE} was established already in 1834 by Carl Gustav Jacob Jacobi as formula 30 in his paper \cite{Jac34}, in the process of proving that every real quadratic form has an orthogonal diagonalization. The survey paper  (\cite{DPTZ}) was written not only to introduce  proofs and  generalizations of this identity for square matrices over $\mathbb{C}$ (see  the references and a citation diagram therein), but  also to comment on the sociology-of-science aspects of  its  rise from obscurity.  The authors  also   point out some contexts in which the eigenvector-eigenvalue  identity has been applied: besides the neutrino oscillations, they mention various numerical methods, random matrix theory, graph theory, and  other applications. Their list could be supplemented e.g. by   \cite{CP}, where the consideration of eigenvalues and eigenvectors of certain unitary matrices allowed for simple generalizations of some of Barry Simon's results about paraorthogonal polynomials on the unit circle.\\

Although many variants of the eigenvector-eigenvalue  identity encountered in the modern literature are presented in \cite{DPTZ}, in most of them the matrices in question  are supposed to be  normal, and  only  eigenvalues of multiplicity one are considered (taking limits is suggested to deal with repeating eigenvalues). Proposition 17 in \cite{DPTZ}, due to Yu Qing Tang, is a generalized eigenvector-eigenvalue  identity for diagonalizable, not necessarily normal,  matrices (proved through Cauchy-Binet formula). It is also pointed out that an auxiliary identity can be extended to a non-diagonalizable setting.  We note that is possible to generalize the eigenvector-eigenvalue  identity even further while allowing  eigenvalues of multiplicity higher than one, by making assumptions only on  one eigenvalue at a time instead on the whole matrix. Namely, in this  paper we  prove the following result:

\begin{theorem} \label{thm: EEhigher} Let $A$ be an $n \times n$ matrix over $\mathbb{C}$, $n \geq 1$, let $I$ be  the $n \times n$ identity matrix, and let $\lambda$ be an eigenvalue of $A$. Assume that the geometric multiplicity  of $\lambda$ equals $k$, $1 \leq k \leq n-1$. Then the following are equivalent:\\
(i) The algebraic multiplicity of $\lambda$ equals $k$.\\
(ii) If $\mathbf{v}_1, ...,\mathbf{v}_k$ is a basis of eigenvectors for  ${\rm ker }(A - \lambda I)$, then  there are    $\mathbf{w}_1,...,\mathbf{w}_k  \in {\rm ker } (A- \lambda I)^*$  such that $\mathbf{w}_i(\mathbf{v}_j) =\delta_{ij}, \ i,j=1,...,k$ and  
\[
\frac{(-1)^kP^{(k)}(\lambda)}{k!}\mathbf{v}\mathbf{w}\trans={\rm adj}_k(A - \lambda I),
\]
where $\mathbf{v}=\mathbf{v}_1 \wedge ...\wedge \mathbf{v}_k, \ \mathbf{w}=\mathbf{w}_1 \wedge...\wedge \mathbf{w}_k$,  $P$ is the characteristic polynomial of $A$ and ${\rm adj}_k(A 
- \lambda I)$ is the $k$th adjugate matrix of $A-\lambda I$. 
\end{theorem}

\par  We will prove this theorem in Section \ref{s: ee}. It is  
 not difficult to  observe that Theorem \ref{thm: EE} is a special case of the implication $(i) \Rightarrow (ii)$ of Theorem \ref{thm: EEhigher} when $k=1$ (see Section \ref{s: remarks} for more details). Our approach is through exterior (geometric) algebra; in particular, we use  the compound matrices and higher adjugate matrices (see subsection \ref{s: compounds} for their definitions and properties). This is different from  other approaches to eigenvector-eigenvalue  identity, either presented in \cite{DPTZ} or developed in later generalizations (e.g. in \cite{DD} or a  generalization for matrices over arbitrary fields possibly with repeating eigenvalues in \cite{CZ}). Note that Lemma 11 in \cite{DPTZ} (after  Bo Berndtsson) does use some exterior algebra, but it assumes the underlying complex linear map to be self-adjoint. Furthermore, it  uses hermitian inner product structure (which we do not) and no compounds or higher adjugates appear there. We will also use  a (general) relation between the (higher) derivatives of a determinant and the traces of the corresponding adjugate matrices, which  also goes back to Jacobi (\cite{Jac41}).\\


The paper is organized as follows. In Section \ref{s: prelim}, we recall basic notions and facts of linear and exterior algebra. Even though most of them are well familiar to the readers, we need to establish consistent notation. Lemma \ref{lemma: ranks} and Proposition \ref{lemma:  wedges}  are new,  and are used explicitly in the proof of Theorem \ref{thm: EEhigher}. Section \ref{s: ee} is devoted to this proof. In Section \ref{s: remarks} we will  present another variant of the identity and we will provide more details  about the eigenvector-eigenvalue identity for Hermitian matrices and some other special classes of matrices. \\

 One terminological difference between \cite{DPTZ} and our paper should be brought to attention to avoid confusion when ``nonzero minors" are mentioned. In  the statement of Theorem \ref{thm: EE}, by a $k \times k$ minor of a matrix $A \in M_{m \times n}(\mathbb{K})$ a $k \times k$ submatrix obtained from $A$ by deleting $m-k$ rows and $n - k$ columns is meant. However, we are used to talking about a $k \times k$ minor while referring to the determinant of such a  submatrix, and will do so consistently in this paper.  

\section{Preliminaries} \label{s: prelim}

\subsection{Vectors, linear transformations and matrices}

The material in this section is elementary, but we recall it anyway,  primarily for unifying the notation. The general references for this section are \cite{Ber}, \cite{DA}, \cite{Fribook}, \cite{DSM}. Throughout,  $\mathbb{K}$ will denote the field of either real or complex numbers (specified when necessary).  All vector spaces considered here will be finite-dimensional.  For   spaces $\mathbf{V},\mathbf{U}$ over $\mathbb{K}$ denote by $L(\mathbf{V},\mathbf{U})$ the linear space of all $\mathbb{K}$-linear transformations $T: \mathbf{V}\to\mathbf{U}$.  For $\mathbf{V}=\mathbf{U}$ we write $L(\mathbf{V}):=L(\mathbf{V},\mathbf{V})$. We use the notation ${\rm im } T:=T(\mathbf{V}) \subset \mathbf{U}$ and $\ker T:=\{\mathbf{v}\in\mathbf{V}: T(\mathbf{v})=\mathbf{0}\}$. \\

The dual space of a vector space $\mathbf{V}$  over $\mathbb{K}$ of finite dimension $n=\dim \mathbf{V}$ is denoted by $\mathbf{V}^*:=L(\mathbf{V},\mathbb{K})$.  Let $\mathbf{b}_1,\ldots,\mathbf{b}_n\in\mathbf{V}$ be a basis in $\mathbf{V}$.  The dual basis $\mathbf{b}_1^*,\ldots,\mathbf{b}_n^*\in\mathbf{V}^*$ is defined by the conditions $\mathbf{b}_i^*(\mathbf{b}_j)=\delta_{ij}$ for $i,j \in \{1,\ldots,n\}$.   For $T \in L(\mathbf{V},\mathbf{U})$ let $T^*: \mathbf{U}^*\to \mathbf{V}^*$ be its dual transformation, acting as $(T^*(\mathbf{u}^*))(\mathbf{v})=\mathbf{u}^*(T(\mathbf{v}))$ for all $\mathbf{u}^* \in \mathbf{V}^*, \mathbf{v} \in \mathbf{V}$. Note that $\mathbf{V}$ is isomorphic to $\mathbb{K}^n$, the vector space of column vectors $\mathbf{x}=(x_1,\ldots,x_n)\trans$,   with the vector $\mathbf{v}=\sum_{i=1}^n b_i\mathbf{b}_i$ corresponding to the vector $\mathbf{b}=(b_1,\ldots, b_n)\trans\in\mathbb{K}^n$.  The basis $\mathbf{b}_i,i\in\{1,...,n\}$ corresponds to the standard basis $\be_i=(\delta_{i1},\ldots,\delta_{in})\trans, i \in \{1,...,n\}$. The space  $\mathbf{V}^*$ can also be identified with $\mathbb{K}^n$, where $\mathbf{b}_i^*$ corresponds to $\mathbf{e}_i$ for $i\in\{1,...,n\}$.  Thus $\mathbf{u}^*(\mathbf{v})$ corresponds to the (matrix) product $\mathbf{x}\trans \mathbf{y}$. \\

Let $X \subset \mathbf{V}, \ X' \subset \mathbf{V^*}$. We define $X^\perp :=\{ \phi \in \mathbf{V}^*: \phi(X)=0\}$ and $(X')^\perp:=\{\mathbf{v} \in \mathbf{V}: \forall \ \phi \in X' \ \phi(\mathbf{v})=0\}$ for  (the second definition reduces to the first one if we canonically identify $\mathbf{V}$ with  its double dual $\mathbf{V^{**}}$). 
Consider $T \in L(\mathbf{V})$. The fundamental theorem of linear algebra says that  $ \ker(T) \simeq (\operatorname {im} (T^*))^{\perp }$ and 
$ \ker(T^*) \simeq (\operatorname {im} (T))^{\perp }$. \\

We will use the following result. We omit the proof. 

\begin{proposition} \label{prop: bases} (see \cite{Fribook}, Problem 5.1.2(e)) Let $\mathbf{X} \subset \mathbf{V}, \ \mathbf{X'} \subset \mathbf{V}^*$ be two $m$-dimensional subspaces. The following are equivalent:\\
 (i) $\mathbf{X} \cap \mathbf{X'}^\perp =\{0\}$; \\
 (ii) $\mathbf{X}^\perp \cap \mathbf{X'} =\{0\}$; \\
 (iii) There exist bases $\{e_1,...,e_m\}$, $\{f_1,...,f_m\}$ of $\mathbf{X}$ and $\mathbf{X'}$, respectively, such that $f_j(e_i)=\delta_{ij}, \ i,j=1,...,m$.
\end{proposition}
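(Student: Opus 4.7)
The plan is to encode (i)--(iii) through the natural bilinear pairing $B\colon \mathbf{U}\times\mathbf{W}\to\mathbb{K}$ given by $B(u,f)=f(u)$, and to reduce each of the three conditions to the single statement that this pairing is nondegenerate. Concretely, fix any bases $u_1,\dots,u_m$ of $\mathbf{U}$ and $f_1,\dots,f_m$ of $\mathbf{W}$, and form the $m\times m$ matrix $M=(f_j(u_i))_{i,j=1}^m$. I will show that each of (i), (ii), (iii) is equivalent to $M$ being invertible, which gives the full chain of equivalences.

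For (i), an element $u=\sum_i a_i u_i\in\mathbf{U}$ lies in $\mathbf{W}^\perp$ iff $f_j(u)=\sum_i a_i f_j(u_i)=0$ for every $j$, i.e., $M^{T}a=0$ where $a=(a_1,\dots,a_m)^{T}$. Hence $\mathbf{U}\cap\mathbf{W}^\perp=\{0\}$ iff $M^{T}$, equivalently $M$, is invertible. The check for (ii) is symmetric: an element $f=\sum_j b_j f_j\in\mathbf{W}$ lies in $\mathbf{U}^\perp$ iff $f(u_i)=\sum_j b_j f_j(u_i)=0$ for every $i$, i.e., $Mb=0$, so (ii) likewise amounts to invertibility of $M$. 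Note that here I use precisely the two flavors of $\perp$ recalled in the preliminaries, where $\mathbf{W}^\perp\subset\mathbf{V}$ and $\mathbf{U}^\perp\subset\mathbf{V}^*$ are the annihilators with respect to the standard pairing.

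For (iii), if dual bases $\{u_i'\}$, $\{f_j'\}$ exist, then the corresponding matrix $M'=(f_j'(u_i'))=I$ is obviously invertible, and invertibility is independent of the choice of basis. Conversely, assume $M$ is invertible; keep the $f_j$'s and set $u_i':=\sum_k P_{ki}u_k$ with $P:=(M^{-1})^{T}$. A direct computation gives $f_j(u_i')=\sum_k P_{ki} M_{kj}=(P^{T}M)_{ij}=\delta_{ij}$, while invertibility of $P$ ensures that $\{u_i'\}$ is a basis of $\mathbf{U}$; this is the desired dual pair. I do not expect any real obstacle: the statement is the standard equivalence ``nondegenerate pairing = existence of dual bases'' for two $m$-dimensional spaces, and the only subtlety to watch is the asymmetric nature of the two annihilators, which is handled automatically by working on both sides of the Gram matrix $M$.
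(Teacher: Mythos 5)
Your argument is correct and complete: encoding all three conditions as invertibility of the Gram matrix $M=(f_j(u_i))$ works, the computations identifying (i) with $\ker M^{T}=\{0\}$ and (ii) with $\ker M=\{0\}$ are right, the basis-independence of invertibility follows from the transformation rule $M'=P^{T}MQ$ under changes of basis, and the explicit construction $u_i'=\sum_k\bigl((M^{-1})^{T}\bigr)_{ki}u_k$ correctly produces the dual pair for (iii). The paper itself gives no proof of this proposition --- it is quoted as Problem 5.1.2(e) from Friedland's book --- so there is nothing to compare against; your self-contained proof via the nondegeneracy of the pairing $B(u,f)=f(u)$ is exactly the standard route one would expect, and it fills in the detail the paper leaves to the reference.
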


   Assume now that $\dim\mathbf{V}=n, \dim \mathbf{U}=m$.  Fix bases $\{\mathbf{e}_1,\ldots,\mathbf{e}_n\}$
and $\{\mathbf{b}_1,\ldots,\mathbf{b}_m\}$  of $\mathbf{V}$ and $\mathbf{U}$ respectively.  
 Each $T \in L(\mathbf{V},\mathbf{U})$ is represented by a matrix $A \in \mathbb{K}^{m\times n}$ with entries  $a_{ij} \in \mathbb{K}$, with $T(\mathbf{x})=A\mathbf{x}$. That is, $A =[T\mathbf{e}_1...T\mathbf{e}_n]$, where $T\mathbf{e}_i=a_{i1}\mathbf{b}_1+...+a_{im}\mathbf{b}_m, \ i=1,...,n$. Then $T^*\in L(\mathbf{U}^*,\mathbf{V}^*)$ is represented (in the respective dual bases) by $A\trans=[a_{ji}]$, the transpose of $A$.
The vector space of $m\times n$ matrices $A=[a_{ij}]$ with entries in $\mathbb{K}$ will be 
denoted by $M_{m \times n}=M_{m \times n}(\mathbb{K}) \simeq \mathbb{K}^{m\times n}$.  We will sometimes blur the distinction between a matrix and the underlying map (having fixed the bases first).  It should be clear from the context which meaning is employed. \\

For a $T \in L(\mathbf{V})$ we define the trace of $T$ as   

\begin{definition} $\tr T:=\sum_{i=1}^n {\mathbf{e}_i}^*(T\mathbf{e_i})$.
\end{definition}

From the definition it easily  follows that $\tr (TS)= \tr (ST)$ for every $T,S \in L(\mathbf{V})$ and that $\tr T=\tr (P^{-1}TP)$ for every linear isomorphism $P \in L(\mathbf{V})$  (that is, the trace does not depend on the choice of the basis in $\mathbf{V}$). 
If $A=[a_{ij}]\in\mathbb{K}^{n\times n}$ represents $T$ in the basis  $\mathbf{e}_i$, we obtain $\tr A:=\tr T=\sum_{i=1}^n a_{ii}$.\\

Denote by $\rank T$ and ${\rm null } T$ the dimensions of ${\rm im} T$ and $\ker T$,  respectively. The rank-nullity theorem says that $\dim \mathbf{V}=\rank T+{\rm null } T$.  
 For a matrix $A\in\mathbb{K}^{m\times n}$ its rank, defined as the rank of the  linear transformation induced by $A$, is equal to the size of the largest nonzero minor of $A$.  Recall that a matrix  $A\in\mathbb{K}^{m\times n}$ has rank $r \geq 1$ if and only if  there exist matrices $F\in\mathbb{K}^{m\times r}$, $G\in\mathbb{K}^{r\times n}$ such that $A=FG$. Then $\rank F =\rank G = \rank A=r$. This is called the full rank factorization of $A$. If $A=F_1G_1$ is another full rank factorization of $A$, then there exists an invertible matrix $R \in \mathbb{K}^{r\times r}$ such that $F_1=FR, \ G_1=R^{-1}G$ (see \cite{PO}). In particular, if  $A\in\mathbb{K}^{n\times n}$ has $\rank A=1$,  then the full rank decomposition $A=\mathbf{u} \mathbf{v}\trans$ with $\mathbf{u} \mathbf{v} \ne 0$ is unique up to scaling, $A=(t^{-1}\mathbf{u})(t\mathbf{v})\trans$, $t \in \mathbb{K}\setminus \{0\}$.  Moreover,  $\tr A=\mathbf{v}\trans \mathbf{u}$. 



\subsection{Compound and adjugate matrices}\label{s: compounds}


In this section $\mathbf{V}$ is an $n$-dimensional vector space over $\mathbb{K}$. Besides the addition  $+$ and scalar multiplication $\cdot$ on $\mathbf{V}$, we consider an  associative binary operation $\wedge$ such that $v \wedge v=0$ for all $v \in \mathbf{V}$. The associative algebra $\bigwedge \mathbf{V}$ is generated by vectors in $\mathbf{V}$ by using the operations $+$, $\cdot$ and $\wedge$. It is called the exterior algebra (or Grassmann  algebra) of $\mathbf{V}$. We have $\bigwedge \mathbf{V}=\bigoplus_{k=1}^n \bigwedge^k \mathbf{V}$, where $\bigwedge ^k \mathbf{V}$ is spanned (as a vector space) by the $k$-wedge products $\mathbf{x}_1\wedge\cdots\wedge \mathbf{x}_k, \ \mathbf{x}_1,..., \mathbf{x}_k \in \mathbf{V}$. A vector $\mathbf{x} \in \bigwedge ^k \mathbf{V}$ of the form $\mathbf{x}=\mathbf{x}_1\wedge\cdots\wedge \mathbf{x}_k$ is called decomposable.\\

 Choosing a basis and using the identification $V \simeq  \mathbb{K}^n$, take  $\mathbf{x}_1,\ldots,\mathbf{x}_k\in\mathbb{K}^n$.  Then $\mathbf{x}_1\wedge\cdots\wedge\mathbf{x}_k$ can be  interpreted   as follows:\\ Let $X=[\mathbf{x}_1\cdots\mathbf{x}_n]\in\mathbb{K}^{n\times k}$ be the matrix whose columns are $\mathbf{x}_1,\ldots,\mathbf{x}_k$.  Denote by $\mathbf{c}_k(X)$ the vector whose coordinates are (determinants of) all $k\times k$ minors of $X$, arranged in the lexicographical order.  Then $\mathbf{x}_1\wedge\cdots\wedge\mathbf{x}_k=\mathbf{c}_k(X)\in \mathbb{K}^{n\choose k}$. The (homogeneous) coordinates of $\mathbf{c}_k(X)$ are called the Pl\"ucker coordinates of $\mathbf{x}_1\wedge\cdots\wedge\mathbf{x}_k$ (for more information on these coordinates see e.g.  \cite{SR}).\\

Every basis $\mathbf{b}_1,\ldots,\mathbf{b}_n$ in $\mathbf{V}$  induces the following basis in $\bigwedge^k \mathbf{V}$: $\{\mathbf{b}_{i_1}\wedge\cdots\wedge\mathbf{b}_{i_k}, 1\le i_1<\cdots<i_k\le n\}$ (we arrange the elements of the latter basis in the lexicographical order). Hence $\dim \bigwedge^k \mathbf{V}={n\choose k}$.  In particular,   $\bigwedge ^1 \mathbf{V}=\mathbf{V}$, and $\bigwedge ^{n} \mathbf{V}$ is a one-dimensional subspace (isomorphic to $\mathbb{K}$). 
It follows  that if $\mathbf{x}_1,\ldots,\mathbf{x}_k$ are linearly independent, then $\mathbf{x}_1\wedge\cdots\wedge\mathbf{x}_k$ is a basis in $\bigwedge^k \textrm{span}(\mathbf{x}_1,\ldots,\mathbf{x}_k)$.  \\

Similarly,  $\{\mathbf{b}_{i_1}^*\wedge\cdots\wedge\mathbf{b}_{i_k}^*, 1\le i_1<\cdots<i_k\le n\}$ is the basis in $(\bigwedge^k \mathbf{V})^*\simeq \bigwedge^k (\mathbf{V}^*)$, dual to $\{\mathbf{b}_{i_1}\wedge\cdots\wedge\mathbf{b}_{i_k}, 1\le i_1<\cdots<i_k\le n\}$. More generally, with  not necessarily increasing indices $i_1,...,i_k$ and $j_1,...j_k$, we have $\mathbf{b}_{i_1}^*\wedge\cdots\wedge\mathbf{b}_{i_k}^*(\mathbf{b}_{j_1}\wedge\cdots\wedge\mathbf{b}_{j_k})=\varepsilon_{j_1,...j_k}^{i_1,...,i_k}$, where $\varepsilon_{j_1,...j_k}^{i_1,...,i_k}$ is the sign of the permutation $(i_1,...i_k) \mapsto (j_1,...,j_k)$, or zero if no such permutation exists.\\

 Assume now that $\mathbf{U}$ is an $m$-dimensional vector space and that $ 1 \leq k \leq \min(m,n)$.
Every $T\in L(\mathbf{V},\mathbf{U})$ induces a transformation $\bigwedge^k T\in  L(\bigwedge^k\mathbf{V},\bigwedge^k\mathbf{U})$, which acts as follows: $(\bigwedge^k T)(\mathbf{x}_1\wedge\cdots\wedge\mathbf{x}_k)=(T\mathbf{x}_1)\wedge\cdots\wedge(T\mathbf{x}_k)$.  Then $\bigwedge^k T^*=(\bigwedge^k T)^*$.  \\

 For $T\in L(\mathbf{V})$ we have $\left(\bigwedge ^{n}T\right)\left(\mathbf{v}_{1}\wedge \dots \wedge \mathbf{v}_{n}\right)=:\det(T)\cdot \mathbf{v}_{1}\wedge \dots \wedge \mathbf{v}_{n}$. If $T$ is represented by a matrix $A$, then the scalar $\det(T)$ is the same as the determinant of $A$ (independent of the choice of a basis).\\

\begin{definition}  (\cite{HJ}, 0.8.1) Let $A\in\mathbb{K}^{m\times n}$ and $k \geq 1$. The $k$-th compound matrix $C_k(A)\in \mathbb{K}^{{m\choose k}\times {n\choose k}}$ is the matrix whose entries are the $k\times k$ minors of $A$. If  $k \geq 1$, then we define $C_0(A):=1 \in \mathbb{K}$ for an arbitrary $A \ne 0$ 
\end{definition}

The following properties can be easily deduced from the definition:\\
\begin{itemize}
\item If $T\in L(\mathbf{V},\mathbf{U})$ is represented by $A\in\mathbb{K}^{m\times n}$, then $\bigwedge ^k T$ is represented by  $C_k(A)\in \mathbb{K}^{{m\choose k}\times {n\choose k}}$;  
\item $C_k(AB)=C_k(A)C_k(B)$;  \item $C_k(aA)=a^kC_k(A)$; \item   $C_k(I_n)=I_{n\choose k}$,
\end{itemize}
 where $A,B \in \mathbb{K}^{m\times n}$, $ a \in \mathbb{C}$ and $I_\ell\in\mathbb{K}^{\ell \times \ell }$ is the identity matrix (\cite{Ai}).   \\

 \begin{definition} \label{def: adjugate} Let $A\in\mathbb{K}^{m\times n}$.  The adjugate of $A=[a_{ij}]$, denoted by $\adj A=[b_{ij}]\in\mathbb{K}^{n\times n}$, is the matirx whose entry  $b_{ij}$ is $(-1)^{i+j}$ times the $(n-1)\times (n-1)$ minor of $A$ obtained by deleting the $j$-th row and $i$-th column of $A$.   More generally, for $1 \leq k \leq n-1$ we define the $k$-th adjugate of $A$, denoted by ${\rm adj}_k\; A\in \mathbb{K}^{{n\choose k}\times {n\choose k}}$, as follows (see \cite{Ai}, \cite{PFG}, \cite{Pri}): 
The entry $(\{1\le i_1<\cdots<i_k\le n\}, \{1\le j_1<\cdots<j_k\le n\})$ is $(-1)^{\sum_{l=1}^k i_l+j_l}$ times the $(n-k)\times (n-k)$ minor of $A$ obtained from $A$ by deleting rows $i_1,\ldots,i_k$ and colums $j_1,\ldots,j_k$.
 \end{definition}
 
  Note that $\adj A=\textrm{adj}_1\; A$. We have the fundamental 
 identity $A \adj A=(\adj A) A=(\det A)I_n$.   Hence if $\det A\ne 0$ it follows that $A^{-1}=(\det A)^{-1} \adj A$.  Moreover, $\adj AB=(\adj B) (\adj A)$ and $\textrm{adj}_k\; (AB)=(\textrm{adj}_k\; B) (\textrm{adj}_k\; A)$.\\

To find relations between the adjugate and compound matrices, recall that the bilinear pairing 
\[
\bigwedge^k V \times  \bigwedge^{n-k} V \to  \bigwedge^n V
\] 

induces an isomorphism $\Phi: \bigwedge^k V \to  (\bigwedge^{n-k} V)^*$ as follows: \\
$\Phi: v \mapsto \phi_v$, where $\phi_v(u)=v \wedge u$. Evaluating in the bases $e_{i_1}\wedge...\wedge e_{i_k}$, $e_{j_1}^*\wedge...\wedge e_{j_{n-k}}^*$ we get $\Phi(e_I)=e_J^*$, where $I \cup J=\{1,...,n\}$.\\

For a $T \in L(V,V)$ we consider the mappings   
\[
V \overset{\Phi}{\longrightarrow}(\bigwedge^{n-k} V)^*\overset{(\bigwedge T^*)^{n-k}}{\longrightarrow}(\bigwedge^{n-k} V)^*  \overset{{\Phi}^{-1}}{\longrightarrow}V
\]

Therefore, if  the matrix $A$ represents $T$, we can define the $k$th adjugate matrix as the matrix representing $\Phi^{-1}\circ (\bigwedge T^*)^{n-k} \circ \Phi$. We immediately get  that $\rank  \textrm{adj}_k\; A=\rank C_{n-k}(A)$.\\

 As observed in \cite{PFG}, the $k$th adjugate can be expressed as 

\[
\textrm{adj}_k A=\Sigma_k E_kC_{n-k}(A)\trans E_k\Sigma_k,
\]
 where $E_k=\begin{pmatrix} 0 &...1\\
...\\
1&...0
\end{pmatrix}$ and $\Sigma_k={\rm diag}(-1)^{i_1+...+i_k}$. Note that $E_k\Sigma_k= (E_k\Sigma_k)\trans$. This representation is compatible with  Definition \ref{def: adjugate}.  \\
 

The compound and adjugate matrices satisfy the following equalities:

 \[
C_k(A)\textrm{adj}_k\; A=(\textrm{adj}_k\; A)C_k(A)=(\det A) I_{n\choose k}.
\]
 
 We will now prove a simple but important lemma, which provides information about the ranks of the adjugate matrices for a given matrix $A$:\\

 \begin{lemma}\label{lemma: ranks}  Let $A\in\mathbb{M}^{n\times n}$ and assume that $\rank A=n-k$ for some $ 1 \leq k\leq n-1$.  Then for an integer $j\in \{1,..,n-1\}$ the following conditions hold:
 \begin{enumerate}
 \item If $j<k$, then $\textrm{adj}_j\; A=0$.
 \item If $j\ge k$, then $\rank {\rm adj}_j\; A={n-k\choose n-j}$. 
 \item In particular,  $\rank {\rm adj}_k\; A=1$ if and only if $\rank A=n-k$.
 \end{enumerate}
 \end{lemma}
\begin{proof}
(1)  Since $\rank A=n-k$, all $(n-j)\times (n-j)$ minors of $A$ are zero for $j<k$.  Hence ${\rm adj}_j\; A=0$.\\
\noindent
(2). Let now $j\geq k$. As  $\rank  \textrm{adj}_j\; A=\rank C_{n-j}(A)$, it is enough to show that $\rank C_{n-j}(A)={n-k\choose n-j}$. Consider $A$ as a matrix of the underlying linear endomorphism in the  basis $\mathbf{e_1},...,\mathbf{e_n}$ of $\mathbf{V}$. Since $\rank A=n-k$,  we can assume that the column vectors $A\mathbf{e_1},...,A\mathbf{e_{n-k}}$ form a basis of the column space $\mathbf{W}$ of $A$. This basis induces the basis $(\bigwedge^{n-j}A)(\mathbf{e_{i_1}},...\mathbf{e_{i_{n-j}}})$,  $i_1,...,i_{n-j} \in \{1,...,n-k\}$, $i_1<...<i_{n-j}$, in the space $\bigwedge^{n-j} \mathbf{W}$, whose dimension is $n-k \choose n-j$. But the vectors of this latter basis are just the columns of $C_{n-j}(A)$. Hence  $\rank C_{n-j}(A)={n-k\choose n-j}$.\\
\noindent
(3) It remains to prove the ``only if" part of statement (3), so assume that $\rank {\rm adj}_k\; A=\rank C_{n-k}(A)=1$. Up to  a permutation, we can assume that $Ae_1,...,Ae_{n-k}$ are linearly independent and decompose $C_{n-k}(A)=Ae_1\wedge...\wedge Ae_{n-k}[p_1...p_{n \choose k}]\trans$, with $p_1,...,p_{n \choose k}\in \mathbb{K}$.  Let $1\leq i \leq n-k$ and $j \in \{n-k+1,...,n\}$. Consider the column $Ae_1\wedge... \widehat{Ae_i}...\wedge Ae_{n-k-1}\wedge Ae_j$ of $C_{n-k}(A)$. That is, in the wedge product $Ae_1\wedge...\wedge Ae_{n-k}$ replace the vector $Ae_i$  by $Ae_j$. In the lexicographic order the number of such a column is some  $\ell\in  \{2,3,..,{n \choose k}\}$.  By multilinearity of the wedge product, $p_\ell Ae_i-(-1)^{n-k-i}Ae_j$ depends linearly on $Ae_1,..,Ae_{i-1},Ae_{i+1},...,Ae_{n-k}$, so $Ae_j$ depends linearly on $Ae_1,..,Ae_{i-1},Ae_i,Ae_{i+1},...,Ae_{n-k}$. Hence $Ae_1,...Ae_{n-k}$ also generate the column space of $A$ and $\rank A=n-k$.
\end{proof}


\begin{example} \label{ex: diagonal}
Let  $B\in\mathbb{K}^{n\times n}$ be the diagonal matrix whose first  $n-k$ diagonal entries are $1$ and the remaining  diagonal entries are zero.  We have $\rank B=n-k$ and $\ker B$ is spanned by $\mathbf{e}_{n-k+1},\ldots,\mathbf{e}_n$. Hence $\mathbf{w}=\mathbf{e}_{n-k+1}\wedge\cdots\wedge\mathbf{e}_n$ is a basis in $\bigwedge^k \ker B$ and $\mathbf{w}=\mathbf{e}_{n-k+1}\trans\wedge\cdots\wedge\mathbf{e}_n\trans$ is a basis in $\bigwedge^k \ker B\trans$.  Moreover, we have  the full rank decomposition $B=FG$, where $F=[e_1...e_{n-k}], G=[e_{n-k+1}, ...,e_n]$.  It follows that ${\rm adj}_k\; B=\mathbf{w}\mathbf{w}\trans$. Also, $\rank {\rm adj}_k B=1$, as expected.
\end{example}

For $A\in\mathbb{M}^{n\times n}$ of $\rank A=n-k$ we can describe ${\rm adj}_k\; A$ even  more precisely. Namely we can have ${\rm adj}_k\; A=\mathbf{u}\mathbf{v}\trans$ with the vectors $\mathbf{u}$ and $\mathbf{v}$  decomposable.

 \begin{proposition} \label{lemma: wedges} Let $A\in\mathbb{M}^{n\times n}$ and assume that $\rank A=n-k$ for some $1 \leq k\leq n-1$. Then ${\rm adj}_k\; A=\mathbf{u}\mathbf{v}\trans,\mathbf{u},\mathbf{v}\in\mathbb{K}^{n \choose k}\setminus\{\mathbf{0}\}$,  and $\mathbf{u}$ and $\mathbf{v}$ are bases in the one-dimensional spaces  $\bigwedge^k \ker A$ and $\bigwedge^k \ker A\trans$ respectively. 
 
\end{proposition}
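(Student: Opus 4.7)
The plan is to reduce $A$ to a simple canonical form by invertible changes of basis, compute $\textrm{adj}_k$ there by inspection, and transfer the answer back.

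By Lemma \ref{lemma: baskerlem}(2), $\rank \textrm{adj}_k A = 1$, so $\textrm{adj}_k A = \mathbf{u}\mathbf{v}\trans$ for some non-zero vectors $\mathbf{u}, \mathbf{v} \in \mathbb{K}^{\binom{n}{k}}$. The remaining content is to identify $\spa\{\mathbf{u}\}$ with $\bigwedge^k \ker A$ and $\spa\{\mathbf{v}\}$ with $\bigwedge^k \ker A\trans$.

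Since $\rank A = n-k$, choose $P, Q \in \GL_n(\mathbb{K})$ with $A = P \tilde A Q$, where $\tilde A := \diag(1,\ldots,1,0,\ldots,0)$ (with $n-k$ ones followed by $k$ zeros). Set $\omega_0 := \be_{n-k+1}\wedge\cdots\wedge\be_n$; viewed in $\mathbb{K}^{\binom{n}{k}}$ this is a standard basis vector. Inspection of the definition shows that the only non-vanishing $(n-k)\times(n-k)$ minor of $\tilde A$ is $\det \tilde A[\{1,\ldots,n-k\},\{1,\ldots,n-k\}]=1$, and the accompanying sign $(-1)^{2[(n-k+1)+\cdots+n]}$ equals $1$. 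Hence $\textrm{adj}_k \tilde A = \omega_0 \omega_0\trans$. Since $\ker \tilde A = \ker \tilde A\trans = \spa\{\be_{n-k+1},\ldots,\be_n\}$, the vector $\omega_0$ is a basis of both $\bigwedge^k \ker \tilde A$ and $\bigwedge^k \ker \tilde A\trans$.

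To transfer back, apply $\textrm{adj}_k(MN) = \textrm{adj}_k N \cdot \textrm{adj}_k M$ (from Section 2) twice to obtain $\textrm{adj}_k A = \textrm{adj}_k Q \cdot \textrm{adj}_k \tilde A \cdot \textrm{adj}_k P$. For invertible $M$, the identity $C_k(M)\textrm{adj}_k M = (\det M)I$ forces $\textrm{adj}_k M$ to be a non-zero scalar multiple of $C_k(M^{-1})$; combining this with $C_k(M)\trans = C_k(M\trans)$ yields
\[
\textrm{adj}_k A = c \cdot \bigl(C_k(Q^{-1})\omega_0\bigr)\bigl(C_k((P\trans)^{-1})\omega_0\bigr)\trans
\]
for some non-zero $c \in \mathbb{K}$. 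Hence $\mathbf{u}$ and $\mathbf{v}$ are non-zero scalar multiples of $C_k(Q^{-1})\omega_0$ and $C_k((P\trans)^{-1})\omega_0$, respectively. The relations $\ker A = Q^{-1}\ker \tilde A$ and $\ker A\trans = (P\trans)^{-1}\ker \tilde A\trans$, together with the functoriality $\bigwedge^k(MW) = C_k(M)(\bigwedge^k W)$ for invertible $M$, give $\bigwedge^k \ker A = \spa\{\mathbf{u}\}$ and $\bigwedge^k \ker A\trans = \spa\{\mathbf{v}\}$.

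The main obstacle is essentially bookkeeping: pinning down the unique non-zero entry (with sign) of $\textrm{adj}_k \tilde A$, and keeping straight how $C_k(\cdot)$ interacts with inversion and transposition. Once these pieces are settled, the conceptual principle that $\bigwedge^k$ intertwines invertible linear maps via compound matrices completes the argument automatically.
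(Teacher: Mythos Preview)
Your proof is correct and follows essentially the same approach as the paper: reduce to the diagonal matrix $\tilde A=\diag(1,\ldots,1,0,\ldots,0)$ via $A=P\tilde A Q$, compute $\textrm{adj}_k\tilde A=\omega_0\omega_0\trans$ directly, and transfer back using $\textrm{adj}_k(MN)=(\textrm{adj}_k N)(\textrm{adj}_k M)$ together with $\textrm{adj}_k M=(\det M)C_k(M^{-1})$ for invertible $M$. The only cosmetic difference is that you open by invoking Lemma~\ref{lemma: baskerlem}(2) for the rank-one statement, whereas the paper simply exhibits the rank-one decomposition.
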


\begin{proof}
Let $B$ be as in the Example \ref{ex: diagonal}. 

If  $A$ is an arbitrary matrix of $\rank A=n-k$, then $A=RBQ$ with   $R,Q\in\mathbb{K}^{n\times n}$ invertible (\cite{PO}).  Hence 
\begin{eqnarray*}
&&\ker A= Q^{-1}(\ker B),\quad\bigwedge^k \ker A=C_k(Q^{-1}) \bigwedge^k \ker B, \\
&&\ker A\trans= (R\trans)^{-1}\ker B,\quad\bigwedge^k \ker A\trans=C_k((R\trans)^{-1}) \bigwedge^k \ker B.
\end{eqnarray*}
Next, recall that ${\rm adj}_k\; Q=\det Q \cdot (C_k(Q))^{-1}$ (and similarly for $R$). Hence 
\begin{eqnarray*}
{\rm adj}_k\; A=(\textrm{adj}_k\; Q)(\textrm{adj}_k\; B)({\rm adj}_k\; R)=(\det Q\det R)  C_k(Q^{-1})\mathbf{w}\mathbf{w}\trans C_k(R^{-1}),
\end{eqnarray*}
with the vectors $C_k(Q^{-1})\mathbf{w} \in \bigwedge^k \ker A$ and $(C_k(R^{-1})\mathbf{w})\trans \in \bigwedge^k \ker A\trans$. This establishes the  claim for $A$.
\end{proof}








\section{Expressing eigenvectors in terms of eigenvalues} \label{s: ee}

Let $A\in\mathbb{M}^{n\times n}$. Recall that a scalar $\lambda \in \mathbb{K}$ is an eigenvalue of $A$ if and only if $\rank (A -\lambda I_n )<n$. If $\lambda$ is an eigenvalue of $A$, then a vector $\mathbf{x}\ne \mathbf{0}$ is an eigenvector of $A\trans$ if and only if  $A\trans \mathbf{x}=\lambda\mathbf{x}$  if and only if $\mathbf{x}\trans A=\lambda\mathbf{x}\trans$.   In the literature  the eigenvectors of $A\trans$ are sometimes called the left eigenvectors of $A$, while the eigenvectors of $A=(A\trans)\trans$ are also referred to as the right eigenvectors of $A$.  (\cite{HJ}, 1.4.6).\\

The characteristic polynomial of $A$ is $P(\lambda)=P_A(\lambda):=\det (A -\lambda I_n ) \in \mathbb{K}[\lambda]$ (we identify a polynomial function of the variable $\lambda \in \mathbb{C}$ with the underlying polynomial).Polynomials are infinitely differentiable, and we can express the derivatives of $P(\lambda)$ as follows:

\begin{proposition} \label{prop: diff} Fix an arbitrary $A \in M_{n \times n}(\mathbb{K}$.  Then

 $P^{(k)}(\lambda)= (-1)^kk! \tr (\text{adj}_k (A-\lambda I))$, $k=1,...,n$.

\end{proposition}

This expression for derivatives can be deduced either from the formula for $\det (A+B)$ in \cite{PFG} (see Example 5.6 there, where $(-1)^kP^{(k)}(\lambda)/k!$ occur as Taylor coefficients of $P(\lambda)$), or from a general expression for the $k$th derivative of the determinant of a matrix, treated as a function of its columns, given in Theorem 3 in  \cite{BJ}(the proof uses the Laplace expansion formula). Since the deduction is straighforward, we omit the details.  Note that the formula $\det((A -\lambda I_n ))'=\tr \adj (A -\lambda I_n )$ (the case $k=1$) is also due to Jacobi (see \cite{Jac41}).\\

 For $A \in \mathbb{K}^{n \times n}$ with $\mathbb{K} \in \{\mathbb{R}, \mathbb{C}\}$ the characteristic polynomial $\det (A -\lambda I_n )$ of $A$ factors over $\mathbb{C}$: $\det (A -\lambda I_n )=\prod_{i=1}^n (\lambda-\lambda_i)$, where $\lambda_i\in \mathbb{C}$ for $i=1,...,n$. If $\{\mu_1,\ldots,\mu_l\}$ is the set of distinct eigenvalues of $A$ (called the spectrum of $A$), then $\det(\lambda I_n-A)=\prod_{j=1}^l (\lambda-\mu_j)^{n_j}$ with $\sum_{j=1}^l n_j=n$.  The number $n_j$ is called the algebraic multiplicity of $\mu_j$. The geometric multiplicity of $\mu_j$ is $\text{null }(A-\mu_j I)$.  Recall that $1\le \text{null}(A-\mu_j  I)\le n_j$.    Since $\det (A -\lambda I_n )=\det (A\trans -\lambda I)$, we see that the spectrum of $A\trans$ is the same as the spectrum of $A$, and moreover the respective algebraic and geometric multiplicities of $\mu_j$ are the same for $A$ and $A\trans$.  (\cite{HJ}, 1.4.1; 1.4.3). The multiplicities of an eigenvalue  are related as follows:\\
 

\begin{theorem}
(\cite{HJ}, 1.4.9, Theorem) Let $A \in  \mathbb{C}^{n \times n}$ and $\lambda \in \mathbb{C}$ be given, and let $k > 1$ be a given
positive integer. Consider the following three statements:\\
(a) $\lambda$ is an eigenvalue of $A$ of geometric multiplicity at least $k$.\\
(b) If $\hat{A} \in  \mathbb{C}^{m \times m}$ is a principal submatrix of $A$ and if $m >n-k$, then $\lambda$ 
is an eigenvalue of $\hat{A}$.\\
(c)  $\lambda$ is an eigenvalue of $A$ of algebraic multiplicity at least $k$.\\
Then (a) implies (b) and (b) implies (c).
\end{theorem}

We will also use the following notion:\\
\begin{definition}\label{def:genevectors}
(Definition 11.4, \cite{PO}) Let $\lambda$ be an eigenvalue of $A \in \mathbb{C}^{n \times n}$. A nonzero vector $\mathbf{x}$ is a generalized eigenvector of level $q$ belonging to $\lambda$ if and only if $(A-\lambda I)^q\mathbf{x}=0$ but $(A-\lambda I)^{q-1}\mathbf{x} \ne 0$. In particular, eigenvectors of $A$ belonging to $\lambda$ are generalized eigenvectors of level $1$.\\
\end{definition}



 Let $\mathbf{U}(\lambda), \mathbf{V}(\lambda)\subset \mathbb{K}^n$ be the subspaces spanned by the right and the left eigenvectors of $A$ corresponding to the eigenvalue $\lambda$. \\




Now we are ready to prove the following generalization of Theorem \ref{thm: EE} (or ``the generalized eigenvectors-from-eigenvalues formula"):

\begin{theorem} \label{thm: maingeom} Let $A$ be an $n \times n$ matrix over $\mathbb{C}$, $n \geq 1$, let $I$ be  the $n \times n$ identity matrix, and let $\lambda$ be an eigenvalue of $A$. Assume that the geometric multiplicity  of $\lambda$ equals $k$, $1 \leq k \leq n-1$. Then the following are equivalent:\\
(i) The algebraic multiplicity of $\lambda$ equals $k$.\\
(ii)   There exist $k$-vectors $\mathbf{v} \in \bigwedge^k {\rm ker }(A-\lambda I)$, , $\mathbf{w} \in \bigwedge^k {\rm ker }(A-\lambda I)\trans$ such that 
\[
\frac{(-1)^kP^{(k)}(\lambda)}{k!}\mathbf{v}\mathbf{w}^{\trans}={\rm adj}_k(A - \lambda I),
\]
where  $P$ is the characteristic polynomial of $A$ and ${\rm adj}_k(A - \lambda I)$ is the $k$th adjugate matrix of $A-\lambda I$. 

\end{theorem}


\begin{proof} $(i) \Rightarrow (ii)$. \\

 Assume that the geometric multiplicity $k$ of $\lambda$ equals its algebraic multiplicity, $1 \leq k \leq n-1$.  Recall that $\text{rank}(\text{adj}_k(B)) = \text{rank}(C_{n-k}(B))$. Further, $\text{rank}(C_{n-k}(B))$ equals $1$ if $\text{rank}(B) = n - k$. By Lemma  \ref{lemma: ranks},  we can represent  $\text{adj}_k(A - \lambda I)$ as   $\text{adj}_k(A - \lambda I_n) = \mathbf{x}{\mathbf{y}}\trans$, where  $\mathbf{x}$ and $\mathbf{y}$ are (decomposable) vectors in $\bigwedge^k {\rm ker }(A-\lambda I)$,  $\bigwedge^k {\rm ker }(A-\lambda I)\trans$, respectively.  \\
 Let $(v_1,...,v_k)$ be a basis for ${\rm ker }(A-\lambda I)$.
 We can take $\mathbf{v}=\mathbf{v}_1 \wedge ...\wedge \mathbf{v}_k$ as a basis for $\bigwedge {\rm ker }(A-\lambda I)$, so $\mathbf{x}=\alpha \mathbf{v}, \ \alpha  \ne 0$. Let $\mathbf{u}:=(1/\alpha)\mathbf{y}$. Then $\text{adj}_k(A-\lambda I) = \mathbf{v}{\mathbf{u}}\trans$. If the algebraic multiplicity of $\lambda$ as the eigenvalue of $A$ is also $k$, then $A-\lambda I$ does not have generalized eigenvectors of levels greater than $1$. That is,  $\text{ker} (A -\lambda I)\trans\cap \text{im} (A-\lambda I) = \{0\}$. By Proposition   \ref{prop: bases},  there exist $\mathbf{w}_1,...,\mathbf{w}_k  \in \text{ker } (A- \lambda I)^*$ such that $\mathbf{w}_i(\mathbf{v}_j) =\delta_{ij}, \ i,j=1,...,k$. Let $\mathbf{w}=\mathbf{w}_1 \wedge...\wedge \mathbf{w}_k$.  We then have $c\mathbf{w}\trans=\mathbf{u}\trans$ 
with $0 \ne c$ and $c = c\mathbf{w}\trans\mathbf{v}=\mathbf{u}\trans\mathbf{v}= \text{tr }\text{adj}(A-\lambda I)$. Hence $\text{adj}(A-\lambda I)=\mathbf{v}\mathbf{u}\trans=\text{tr }\text{adj}(A-\lambda I)\mathbf{v}\mathbf{w}\trans$. By Proposition \ref{prop: diff}, $\text{tr }\text{adj}(A-\lambda I)=(-1)^nP'(\lambda)$, where $P(t)=\det (A-tI)$. Hence $\text{adj}_k(A - \lambda I)=\mathbf{v}\mathbf{u}\trans=\text{tr }\text{adj}_k(A - \lambda I)\mathbf{v}\mathbf{w}\trans=\frac{(-1)^kP^{(k)}(\lambda)}{j!}\mathbf{v}\mathbf{w}\trans$. \\

$(ii) \Rightarrow (i)$: If the geometric multiplicity of $\lambda$ is $k$ and (ii) holds , then $P^{(k)}(\lambda)\neq 0$, so
the algebraic multiplicity of $\lambda$ is $k$. 

\end{proof}

Remark: The equivalence between (i) and (ii) does not hold for $k=n$, and this is why this case is excluded from the formulation of Theorem \ref{thm:  maingeom}. Indeed, the geometric multiplicity  of an eigenvalue $\lambda$ of  an $n \times n$ matrix  $A$ is $n$ if and only if $A=\lambda I$, and then necessarily also the algebraic multiplicity of $\lambda$ is $n$. However, all  the   adjugates of a zero matrix are zero matrices themselves, so the full rank decomposition as in (ii) is impossible.\\

\section{Consequences and further remarks}\label{s: remarks}

We can  rephrase the generalized eigenvectors-from-eigenvalues formula starting from an assumption on the algebraic multiplicity of the eigenvalue $\lambda$.

\begin{theorem} \label{thm: mainalg}
Assume $\lambda$ is an eigenvalue of $A$ with algebraic multiplicity $k$. Then TFAE: (i) The geometric multiplicity of $\lambda$  equals $k$; (ii) There exist $k$-vectors $\mathbf{v} \in \bigwedge^k {\rm ker }(A-\lambda I)$,  $\mathbf{w} \in \bigwedge^k {\rm ker }(A-\lambda I)\trans$ such that 
\[
\frac{(-1)^kP^{(k)}(\lambda)}{k!}\mathbf{v}\mathbf{w}^{\trans}={\rm adj}_k(A - \lambda I),
\]
where  $P$ is the characteristic polynomial of $A$ and ${\rm adj}_k(A - \lambda I)$ is the $k$th adjugate matrix of $A-\lambda I$. 
\end{theorem}
\begin{proof} $(i) \Rightarrow (ii)$ has been proved in Theorem \ref{thm: maingeom}. Under the assumption (ii), the rank of ${\rm adj}_k(A - \lambda I)$ is $1$. By Lemma \ref{lemma: ranks}, part (3), $\rank (A - \lambda I)=n-k$, which proves (i).
\end{proof}

\subsection{The case of normal matrices}

Recall that $A\in\mathbb{C}^{n\times n}$ is called normal if $A^*A=AA^*$, where $A^*:=\bar{A}\trans$. A normal matrix $A$ can be diagonalized by a unitary matrix $U\in\mathbb{C}^{n\times n}$: $C=U\Lambda U^*$, where $\Lambda$ is a diagonal matrix whose diagonal entries are the eigenvalues of $A$.
Thus if $\mathbf{x}$ is the right eigenvector of $A$ then $\bar{\mathbf{x}}$ is the left eigenvector of $A$.
Thus if $\lambda$ is an eigenvalue of algebraic multiplicity $k\in {1,...,n-1}$ we obtain the formula for  $\mathbf{u}\mathbf{u}^*$ in terms of $\textrm{adj}_k\;(A -\lambda I_n )$. \\

\subsection{The Hermitian case} Consider a Hermitian matrix $A$.  Then every eigenvalue of $A$ is real and has algebraic multiplicity $1$ (hence also geometric multiplicity $1$). It is easy to see that Theorem \ref{thm: EE} is a special case of Theorem \ref{thm: maingeom}.   If we fix an $i \in \{1,...,n\}$ and take $\lambda=\lambda_i$, then the product of differences of eigenvalues of $A$ on the left-hand side of Theorem \ref{thm: EE} is the derivative of the characteristic polynomial of $A$ evaluated at $\lambda=\lambda_i$. Recall that for a Hermitian matrix  the left eigenvector can be chosen to be the complex conjugate of the right eigenvector: $\mathbf{w}=\overline{ \mathbf{v}}$, which yields the term $|v_{i,j}|^2$. The expression on the right-hand side  involving eigenvalues of the submatrix $M_j$ is the characteristic polynomial of  $M_j$ evaluated at $\lambda_i$, that is, an entry of the adjugate matrix ${\rm adj}(A-\lambda_iI)$ of $A-\lambda_iI$.  \\

\textbf{Acknowledgments:}  I  thank James Epperson for bringing the eigenvector~-eigenvalue formula to my attention, and Shmuel Friedland for valuable discussions on various topics in linear and multilinear algebra. I also thank the anonymous referees for their constructive remarks.\\

\textbf{Competing interests:} The author declares no competing interests.\\

\textbf{Funding:} The research presented in this article received no external funding, private or public.\\

\textbf{Data Availability:} Not applicable.



 





\bibliographystyle{plain}

\begin{thebibliography}{MMM}

\bibitem{Ai} Aitken, A. C.:  Determinants and matrices. Oliver and Boyd, Edinburgh; Interscience Publishers, New York  (1956) 

\bibitem{Ber} 
 Bernstein,  D. S.: Scalar, vector, and matrix mathematics. Theory, facts, and formulas. Princeton University Press, Princeton, NJ  (2018) 

\bibitem{BJ}  Bhatia, R.,  Jain, T: Higher order derivatives and perturbation bounds
for determinants. Linear Algebra and its Applications \textbf{431}, 2102-2108 (2009) 

\bibitem{CP}  Castillo, K., Petronilho, J.: Refined interlacing properties for zeros of paraorthogonal polynomials on
the unit circle. Proc. Amer. Math. Soc. \textbf{146}, 3285–3294 (2018) 


\bibitem{CZ} Castillo, K., Zaballa, I.:   
On a formula of Thompson and McEnteggert for the adjugate matrix.  
Linear Algebra Appl. \textbf{634}, 37-56  (2022)

\bibitem{DA} DeAlba, L. M.: Determinants and eigenvalues. Pages 4-1--4-11 in \cite{hand}. 

 \bibitem{DD} Ding, J.,  Ding, J.:   An eigenvector-eigenvalue-identity for matrices with a non-semi-simple eigenvalue. Oper. Matrices \textbf{15}, no. 4, 1241-1256 (2021)

\bibitem{DPTZ}  Denton, P. B.,  Parke, S. J., Tao, T., Zhang, X.:   Eigenvectors from Eigenvalues: a survey of a basic identity in linear algebra.  
Bull. Amer. Math. Soc. (N.S.) \textbf{59}, no. 1, 31-58 (2022)

\bibitem{DPZ}  Denton, P. B.,  Parke, S. J.,  Zhang, X: Neutrino oscillations in matter via eigenvalues. Phys. Rev. D \textbf{101}, no. 9, 093001, 12 pp. (2020)


\bibitem{DSM}  Dias da Silva, J. A., Machado, A.: Multilinear algebra. Pages 13-1--13-26  in \cite{hand}. 

\bibitem{Fribook} Friedland, S.:  Matrices-- Algebra, Analysis and Applications.  World Scientific Publishing Co. Pte. Ltd., Hackensack, NJ, \url{http://www2.math.uic.edu/~friedlan/bookm.pdf} (2016)



\bibitem{hand} \emph{Handbook of linear algebra.}
Edited by Leslie Hogben. Associate editors: Richard Brualdi, Anne Greenbaum and Roy Mathias. Discrete Mathematics and its Applications (Boca Raton). Chapman \& Hall/CRC, Boca Raton, FL (2007)

 \bibitem{HJ} Horn, R. A., Johnson, C. R.: Matrix analysis. Second edition. Cambridge University Press, Cambridge (2013) 

\bibitem{Jac34}  Jacobi, C. G. J.: De binis quibuslibet functionibus homogeneis secundi ordinis per substitutiones lineares in alias binas tranformandis, quae solis quadratis variabilium constant; una cum variis theorematis de transformatione et determinatione integralium multiplicium. Journal f\"ur die reine und angewandte Mathematik \textbf{12}, 1-69  (1834) 

\bibitem{Jac41}  Jacobi, C. G. J.: De formatione et proprietatibus Determinantium. Journal f\"ur die reine und angewandte Mathematik \textbf{22}, 285-318 (1841) 

\bibitem{PO} Piziak, R.,   Odell, P.L.: Full Rank Factorization of Matrices. Mat. Mag.  \textbf{72}, no. 3, 193-201 (1999)




 \bibitem{PFG} Prells, U., Friswell, M. I., Garvey, S. D.: Use of geometric algebra: compound matrices and the determinant of the sum of two matrices. Proceedings of the Royal Society of London A: Mathematical, Physical and Engineering Sciences \textbf{459}, 273-285 (2003)
 
\bibitem{Pri}   Price, G. B.:  Some Identities in the Theory of Determinants. The American Mathematical Monthly \textbf{54} (2),  75-90 (1947) 



\bibitem{SR} 
Shafarevich, I. R., Remizov, A. O.: 
Linear algebra and geometry. 
Translated from the 2009 Russian original by David Kramer and Lena Nekludova. Springer, Heidelberg (2013) 







 \end{thebibliography}

\end{document}